\numberwithin{equation}{section}
                        \theoremstyle{plain}
\newcommand\no[1]{}
\newtheorem{theorem}{Theorem}[section]
\newtheorem{thm}{Theorem}
\newtheorem{lemma}[theorem]{Lemma}
\newtheorem{proposition}[theorem]{Proposition}
\theoremstyle{definition}
\newtheorem{remark}[theorem]{Remark}
\newtheorem{example}[theorem]{Example}
\def\BC{\mathbb C}
\def\BZ{\mathbb Z}
\def\BR{\mathbb R}
\def\fb{\mathfrak b}
\def\la{\langle}
\def\ra{\rangle}
\DeclareMathOperator{\tr}{\mathrm tr}
\def\be { \begin{equation} }
\def\ee { \end{equation} }
\begin{document}

\title[left-orderability and cyclic branched coverings]{On left-orderability and cyclic branched coverings}

\author[Anh T. Tran]{Anh T. Tran}
\address{Department of Mathematics, The Ohio State University, Columbus, OH 43210, USA}
\email{tran.350@osu.edu}

\thanks{2010 \textit{Mathematics Subject Classification}.\/ 57M27.}
\thanks{{\it Key words and phrases.\/}
2-bridge knot, left-orderability, cyclic branched coverings.}

\begin{abstract}
In a recent paper Y. Hu has given a sufficient condition for the fundamental group of the $r$-th cyclic branched covering of $S^3$ along a prime knot to be left-orderable in terms of representations of the knot group. Applying her criterion to a large class of two-bridge knots, we determine a range of the integer $r>1$ for which the $r$-th cyclic branched covering of $S^3$ along the knot is left-orderable.
\end{abstract}

\maketitle

\section{Introduction}

A non-trivial group $G$ is called left-orderable if there exists a strict total ordering $<$ on its elements such that $g<h$ implies $fg<fh$ for all elements $f,g,h \in G$. Knot groups and more generally the fundamental group of an irreducible 3-manifold with positive first Betti number are examples of left-orderable groups \cite{HSt}. Left-orderable groups have recently attracted the attention of many people partly because of its possible connection to L-spaces, a class of rational homology 3-spheres defined by Osvath and Szabo \cite{OS} using Heegaard Floer homology, via a conjecture of Boyer, Gordon and Watson \cite{BGW}. This conjecture predicts that an irreducible rational homology 3-sphere is an
L-space if and only if its fundamental group is not left-orderable. The conjecture has been confirmed for Seifert fibered manifolds, Sol manifolds, double branched coverings of non-splitting alternating links \cite{BGW} and Dehn surgeries on the figure eight knot, on the knot $5_2$ and more generally on genus one two-bridge knots (see \cite{BGW, CLW}, \cite{HT1} and \cite{HT2, HT3, Tr} respectively). A technique that has so far worked very well for proving the left-orderability of fundamental groups is lifting a non-abelian $SU(1,1)$ representation (or equivalently a non-abelian $SL_2(\BR)$ representation) of a 3-manifold group to the universal covering group $\widetilde{SU(1,1)}$ and then using the result by Bergman \cite{Be} that $\widetilde{SU(1,1)}$ is a left-orderable group. This technique, which is based on an important result of Khoi \cite{Kh}, was first introduced in \cite{BGW} and was applied in \cite{HT1, HT2, HT3, Tr} to study the left-orderability of Dehn surgeries on genus one two-bridge knots.

The left-orderability of the fundamental groups of non-hyperbolic rational homology 3-spheres has already been characterized in \cite{BRW}. For hyperbolic rational homology 3-spheres, many of them can be constructed from the cyclic branched coverings of $S^3$ along a knot.  Based on the Lin's presentation \cite{Li} of a knot group and the technique for proving the left-orderability of fundamental groups mentioned above, Y. Hu \cite{Hu} has recently given a suffcient condition for the fundamental group of the $r$-th cyclic branched covering of $S^3$ along a prime knot to be left-orderable in terms of representations of the knot group. As an application, she proves that for any two-bridge knot $\fb(p,m)$, with $p\equiv 3 \pmod{4}$, there are only finitely many cyclic branched coverings whose fundamental groups are not left-orderable. In particular for the two-bridge knots $5_2$ and $7_4$, Y. Hu shows that the fundamental groups of the $r$-th cyclic branched coverings of $S^3$ along them are left-orderable if $r \ge 9$ and $r \ge 13$ respectively.

In this paper by applying Hu's criterion to a large class of two-bridge knots, which includes the knots $5_2$ and $7_4$, we determine a range of the integer $r>1$ for which the $r$-th cyclic branched covering of $S^3$ along the knot is left-orderable. 

Let $K=J(k,l)$ be the double twist knot as in Figure 1. 
Note that $J(k,l)$ is a knot if and only if $kl$ is even, and is the trivial knot if $kl=0$. 
Furthermore, $J(k,l)\cong J(l,k)$ and 
$J(-k,-l)$ is the mirror image of $J(k,l)$. Hence, in the following, we consider $K=J(k,2n)$ for $k>0$ and $|n|>0$. 

In the Schubert's normal form $\fb(p,m)$, where $p, m$ are positive integers such that $p$ is odd and $0<m<p$, of a two-bridge knot one has $J(k,2n)=\fb(2kn-1,2n)$ if $n>0$ and $J(k,2n)=\fb(1-2kn,-2n)$ if $n<0$, see e.g. \cite{BZ}. 

For a knot $K$ in $S^3$ and any integer $r>1$, let $X_K^{(r)}$ be the $r$-th cyclic branched covering of $S^{3}$ along $K$. The following theorem generalizes Example 4.4 in \cite{Hu}.

\begin{thm}
\label{main}
Suppose $m$ and $n$ are positive integers. Then the group $\pi_1(X^{(r)}_{J(2m,2n)})$ is left-orderable if $r > \frac{\pi}{\cos^{-1}\sqrt{1-(4mn)^{-1}}}$.
\end{thm}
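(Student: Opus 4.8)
The plan is to apply Hu's criterion, which converts the left-orderability of $\pi_1(X^{(r)}_{J(2m,2n)})$ into a question about representations of the knot group $\pi_1(S^3\setminus J(2m,2n))$. Concretely, it is enough to produce, for the given $r$, a non-abelian representation $\rho$ of the knot group into $SU(1,1)$ (equivalently into $SL_2(\BR)$) that sends a meridian $\mu$ to an elliptic element of rotation angle $\pi/r$, i.e. with $\operatorname{tr}\rho(\mu)=2\cos(\pi/r)$. Indeed, then $\rho(\mu)^r=-I$, so $\rho$ descends to the orbifold fundamental group of $(S^3,J(2m,2n))$ with cone angle $2\pi/r$, restricts to $\pi_1(X^{(r)})$, and lifts to the universal covering group $\widetilde{SU(1,1)}$, which is left-orderable by Bergman's theorem; this is exactly the mechanism packaged in Hu's criterion. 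Thus the entire task reduces to deciding for which $r$ such an $SU(1,1)$ representation with meridian angle $\pi/r$ exists.

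Since $J(2m,2n)$ is a two-bridge knot with meridian generators $a,b$, its non-abelian characters form an affine curve $\Phi_{m,n}(x,y)=0$ in the trace coordinates $x=\operatorname{tr}\rho(a)=\operatorname{tr}\rho(b)$ and $y=\operatorname{tr}\rho(ab)$, and $\Phi_{m,n}$ can be written explicitly through Chebyshev polynomials $S_k$ (defined by $S_0=1$, $S_1=z$, $S_{k+1}=zS_k-S_{k-1}$) evaluated at the trace functions of the two twist regions. Writing $x=2\cos\theta$ for an elliptic meridian, I would first record the dichotomy separating the two real forms: after diagonalizing $\rho(a)=\operatorname{diag}(e^{i\theta},e^{-i\theta})$ and writing $\beta,\gamma$ for the off-diagonal entries of $\rho(b)$, a direct computation gives
\[
\kappa:=\operatorname{tr}\rho([a,b])=2x^{2}+y^{2}-x^{2}y-2=2+4\beta\gamma\sin^{2}\theta .
\]
The invariant Hermitian form is indefinite, so $\rho$ is conjugate into $SU(1,1)$ rather than the compact $SU(2)$, precisely when $\beta\gamma>0$, that is when $\kappa>2$. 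The problem therefore becomes: for $\theta=\pi/r$, does $\Phi_{m,n}(2\cos\theta,y)=0$ admit a real solution $y$ with $\kappa>2$?

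The key point is that $\kappa=2$ is exactly the reducibility locus, so along the non-abelian curve the sign of $\kappa-2$ can change only at the bifurcation points where the non-abelian component meets the abelian one, namely where $x=2\cos\theta$ corresponds to a root $t=e^{2i\theta}$ of the Alexander polynomial. For the double twist knot one computes
\[
\Delta_{J(2m,2n)}(t)=mn\,t^{2}-(2mn-1)t+mn,
\]
whose two roots lie on the unit circle at $t=e^{\pm 2i\theta_0}$ with $\cos 2\theta_0=1-(2mn)^{-1}$, equivalently $\sin\theta_0=(2\sqrt{mn})^{-1}$; hence $\theta_0=\cos^{-1}\sqrt{1-(4mn)^{-1}}$, and this is the only such angle in $(0,\pi/2]$. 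On the interval $(0,\theta_0)$ the sign of $\kappa-2$ is therefore constant, and it is positive: this is visible on the governing real branch, for instance already in the case $m=n=1$ (the trefoil $J(2,2)$), where the non-abelian characters satisfy $\operatorname{tr}\rho(ab)=1$, so $\kappa=x^{2}-1$ and $\kappa>2$ exactly for $\theta<\pi/6=\theta_0$, the branch crossing into $SU(2)$-type beyond $\theta_0$. Granting this, a non-abelian $SU(1,1)$ representation with meridian angle $\theta=\pi/r$ exists if and only if $\pi/r<\theta_0$, i.e. if and only if $r>\pi/\cos^{-1}\sqrt{1-(4mn)^{-1}}$, which is the asserted bound.

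The step I expect to be the main obstacle is upgrading this endpoint-and-sign bookkeeping to a genuine existence statement valid for every $\theta\in(0,\theta_0)$, not merely near the bifurcation point $\theta_0$ or in the sample case. I must exhibit a connected real arc of $\Phi_{m,n}(2\cos\theta,y)=0$ along which $y$ stays real, $\rho$ stays irreducible, and $\kappa-2$ stays positive across the whole interval. This is where the explicit Chebyshev form of $\Phi_{m,n}$ is needed, together with a monotonicity or root-interlacing argument controlling the real roots of $\Phi_{m,n}(2\cos\theta,\,\cdot\,)$ as $\theta$ decreases from $\theta_0$ to $0$; the two-twist-region structure, which contributes the product-type terms $S_{m-1}S_{n-1}$ and hence the factor $mn$, is precisely what must be kept under simultaneous control over the full range, and this is the technical heart of the argument.
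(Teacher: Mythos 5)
Your overall framework---Hu's criterion, an elliptic meridian with $x=2\cos(\pi/r)$, and the dichotomy that a real irreducible character is conjugate into $SU(1,1)$ exactly when $\tr\rho([a,b])>2$---is sound, and your identification of the critical angle $\theta_0=\cos^{-1}\sqrt{1-(4mn)^{-1}}$ with the argument of the unit-circle roots of $\Delta_{J(2m,2n)}(t)=mnt^2-(2mn-1)t+mn$ is a genuinely nice conceptual reading of where the bound comes from (the paper never mentions the Alexander polynomial). But as written the proposal has a real gap in both of the steps that actually carry the theorem, and you say so yourself. First, you never establish that $\Phi_{m,n}(2\cos\theta,\,\cdot\,)$ has a real root for every $\theta\in(0,\theta_0)$. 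The paper gets this for free from parity: $J(2m,2n)=\fb(4mn-1,2n)$, so the Riley polynomial has $y$-leading term $y^{2mn-1}$ of odd degree, hence a real root for every real $x$. Second, your sign argument ("$\kappa-2$ has constant sign on $(0,\theta_0)$, and it is positive") is only checked for the trefoil and relies on following a single connected real arc. The principle that $\kappa-2$ changes sign only at reducible characters does not by itself control the picture: pairs of real roots of $\Phi_{m,n}(2\cos\theta,\,\cdot\,)$ can collide and become complex (the real branch turns around in $x$) without passing through a reducible character, so connectedness of "the governing real branch" over all of $(0,\theta_0)$, and hence the propagation of the sign from the trefoil-like endpoint behavior, is exactly what needs proof and is not supplied.

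The paper avoids both issues by proving a pointwise inequality instead of a connectivity statement: Proposition 2.7 shows that \emph{every} real root $y$ of $\phi_K(x,y)$ with $2\sqrt{1-(4mn)^{-1}}<|x|\le 2$ satisfies $y>2$ (in the paper's coordinate $y=\tr\rho(ab^{-1})$), by assuming $y\le 2$, using the Chebyshev identity $S_j^2(z)-zS_j(z)S_{j-1}(z)+S_{j-1}^2(z)=1$ to rewrite $\phi_K=0$ as $1=(y+2-x^2)(\lambda+2-x^2)S_{m-1}^2(y)S_{n-1}^2(\lambda)$, bounding $|S_{m-1}(y)|\le m$ and $|S_{n-1}(\lambda)|\le n$ via $|\sin kt|\le|k\sin t|$, and deriving $x^2\le 4-\tfrac{1}{mn}$, a contradiction. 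Combined with Khoi's result that $y>2$ forces conjugacy into $SU(1,1)$, this closes the argument. If you want to salvage your route, you would need to replace the "connected arc" heuristic with an argument of this unconditional type; otherwise the proof is incomplete at its technical heart.
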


\begin{example} 

1) For the knot $5_2=J(4,2)$, the manifold $X^{(r)}_{5_2}$ has left-orderable fundamental group if $r > \frac{\pi}{\cos^{-1}\sqrt{\frac{7}{8}}} \approx 8.69$, i.e. $r \ge 9$.

2) For the knot $7_4=J(4,4)$, the manifold $X^{(r)}_{7_4}$ has left-orderable fundamental group if $r >  \frac{\pi}{\cos^{-1}\sqrt{\frac{15}{16}}} \approx12.43$, i.e. $r \ge 13$.
\end{example} 

\begin{remark}
\label{r1}
Da̧bkowski, Przytycki and Togha \cite{DPT} proved that the group $\pi_1(X^{(r)}_{J(2m,-2n)})$, for positive integers $m$ and $n$, is not left-orderable for any integer $r > 1$.
\end{remark}

 We also prove the following result in this paper.

\begin{thm} 
\label{main2}
Suppose $m \ge 0$ and $n>0$ are integers. Let $q=2n^2 +2n\sqrt{4m(m+1)+n^2}$. 

(a) The group $\pi_1(X^{(r)}_{J(2m+1, 2n)})$ is left-orderable if one of the following holds: 

\qquad (i) $n$ is even and $r>\frac{\pi} { \cos^{-1} \sqrt{ 1-q^{-1}} }$.

\qquad (ii) $n$ is odd $>1$ and $r>\max \{ \frac{\pi} { \cos^{-1} \sqrt{ 1-q^{-1}} }, 4m+2 \}$.

(b) The group $\pi_1(X^{(r)}_{J(2m+1, -2n)})$ is left-orderable if one of the following holds:

\qquad (i) $n$ is odd and $r>\frac{\pi} { \cos^{-1} \sqrt{ 1-q^{-1}} }$.

\qquad (ii) $n$ is even and $r>\max \{ \frac{\pi} { \cos^{-1} \sqrt{ 1-q^{-1}} }, 4m+2 \}$.
\end{thm}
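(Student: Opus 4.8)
The plan is to invoke Hu's criterion \cite{Hu}, which reduces the theorem to exhibiting, for the stated range of $r$, an irreducible representation $\rho$ of the knot group $\pi_1(S^3\setminus J(2m+1,\pm 2n))$ into $SL_2(\BR)$ (equivalently $SU(1,1)$) whose meridian $\mu$ is sent to an elliptic element of trace $2\cos(\pi/r)$. I would parametrize the non-abelian representations in Riley form, sending the two meridian generators $a,b$ to
\[
\rho(a)=\begin{pmatrix} s & 1 \\ 0 & s^{-1}\end{pmatrix}, \qquad \rho(b)=\begin{pmatrix} s & 0 \\ -u & s^{-1}\end{pmatrix},
\]
with $(s,u)$ subject to the Riley equation $\phi_K(s,u)=0$. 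Since $\phi_K$ is invariant under $s\mapsto s^{-1}$, its coefficients as a polynomial in $u$ lie in $\BR[x]$ with $x=s+s^{-1}$; writing $s=e^{i\theta}$ makes $x=2\cos\theta$ real, so $\phi_K(2\cos\theta,u)=0$ is a real polynomial equation in $u$. A real root $u$ for which the invariant Hermitian form is indefinite yields a representation conjugate into $SU(1,1)$ rather than $SU(2)$, so the whole problem becomes: decide, as a function of $\theta$, when such a root exists, and arrange $\theta=\pi/r$.

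The second step is to make $\phi_K$ explicit. Using $J(2m+1,2n)=\fb((4m+2)n-1,2n)$ together with the mirror relation for $J(2m+1,-2n)$, I would expand the meridian relator through the two twist regions and write $\phi_K$ in terms of Chebyshev polynomials $S_k$, normalized by $S_0=1$, $S_1=t$, $S_{k+1}=tS_k-S_{k-1}$, so that $S_k(2\cos\phi)=\sin((k+1)\phi)/\sin\phi$. The $(2m+1)$-twist region contributes a factor built from $S_{2m}$ (equivalently from $\sin((4m+2)\theta)$), and the $2n$-twist region a factor built from $S_{n-1}$, with the reality and signature of the resulting $u$-root governed by a discriminant. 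Setting that discriminant to zero should locate the angle $\theta_0$ at which the real $SU(1,1)$-root collides with its conjugate and the representation leaves $SL_2(\BR)$; I expect the resulting quadratic to be
\[
Q^2-4n^2Q-16m(m+1)n^2=0,
\]
whose positive root is exactly $q=2n^2+2n\sqrt{4m(m+1)+n^2}$, and a short trigonometric identity then turns $\sin^2\theta_0=q^{-1}$ into $\cos\theta_0=\sqrt{1-q^{-1}}$. Thus an $SL_2(\BR)$ representation with meridian trace $2\cos\theta$ exists for $\theta\in(0,\theta_0)$, and $\pi/r<\theta_0$ is precisely the stated bound $r>\pi/\cos^{-1}\sqrt{1-q^{-1}}$.

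The parity hypotheses should come from tracking the sign of the Chebyshev factor attached to the $(2m+1)$-twist region along this interval. The sign of $\sin((4m+2)\theta)$, relative to the sign forced on $u$ by the indefiniteness condition, flips with the parity of $n$ and with whether the second region is $+2n$ or $-2n$; this is what interchanges the roles of even and odd $n$ between parts (a) and (b). In the favorable parity the required sign holds throughout $(0,\theta_0)$, and the bound coming from $\theta_0$ alone suffices. In the unfavorable parity the sign is correct only before the first zero of $\sin((4m+2)\theta)$, namely for $\theta<\pi/(4m+2)$; imposing this together with $\theta=\pi/r$ produces the extra condition $r>4m+2$, which explains the $\max$ in cases (a)(ii) and (b)(ii). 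Finally I would verify irreducibility (automatic once $u\neq 0$ and $x\neq\pm2$) and check that the lifting/translation-number hypotheses of Hu's criterion hold for the representation produced, whereupon the criterion yields left-orderability of $\pi_1(X^{(r)}_{J(2m+1,\pm 2n)})$.

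The main obstacle I anticipate is the explicit analysis of the real $u$-root of $\phi_K(2\cos\theta,u)$: both pinning down $\theta_0$ through the discriminant quadratic whose positive root is $q$, and, more delicately, controlling the signature of the invariant form across the two parities and the two signs of the second twist region, since it is exactly these signs that separate the genuine $SL_2(\BR)$ locus from the $SU(2)$ locus and thereby force the $4m+2$ bounds. A secondary difficulty is confirming that Hu's hypotheses are met by the representation itself rather than merely by its trace data, which requires following the representation, or a path of representations with meridian trace decreasing from $2$, into $\widetilde{SL_2(\BR)}$.
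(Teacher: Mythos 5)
Your skeleton is the paper's: Riley-form representations with meridian trace $2\cos\frac{\pi}{r}$, a real root $y$ of the Riley polynomial with $y>2$ so that Khoi's result conjugates the representation into $SU(1,1)$, and then Hu's criterion. But the two items you defer as ``anticipated obstacles'' are the entire content of the proof, and the mechanisms you sketch for them are not the ones that work. First, you never establish that $\phi_K(2\cos\frac{\pi}{r},y)=0$ has a real root $y$ at all, and this is precisely where the parity of $n$ and the bound $r>4m+2$ enter. In the paper, when $K=\fb(p,\cdot)$ with $p\equiv 3\pmod 4$ (i.e.\ cases (a)(i) and (b)(i)) the Riley polynomial is monic of odd degree $\frac{p-1}{2}$ in $y$, so a real root exists for every real $x$ and no extra condition is needed. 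When $p\equiv 1\pmod 4$ the degree is even and existence must be proved by hand: Proposition \ref{riley22} runs an intermediate value argument between $y=x^2-2$, where $\phi_K=1>0$, and a point $y_0>x^2-2$ where $\lambda$ hits a suitable zero-adjacent value of $S_{n-1}-S_{n-2}$ and $\phi_K<0$; the sign computation there requires $S_m(y_0)>S_{m-1}(y_0)>0$, which is guaranteed by $x^2-2\ge 2\cos\frac{\pi}{2m+1}$, i.e.\ $r>4m+2$. Your attribution of the parity split and the $4m+2$ bound to the sign of $\sin((4m+2)\theta)$ is a guess that does not match this and, more importantly, is not an existence argument.

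Second, the threshold $q$ does not arise from setting a discriminant to zero, and the picture of an $SU(1,1)$ root ``colliding with its conjugate and leaving $SL_2(\BR)$'' is not what happens: $\phi_K$ has large $y$-degree, and the danger below the threshold is not that real roots disappear but that a real root may lie in the range $y\le 2$, where Khoi's criterion puts the representation in $SU(2)$ rather than $SU(1,1)$. The paper's Proposition \ref{y2}(b) shows that \emph{every} real root satisfies $y>2$ once $|x|>2\sqrt{1-q^{-1}}$, by assuming $y\le 2$ and deriving the contradiction $x^2\le 4-\frac{2}{n^2+|n|\sqrt{4m(m+1)+n^2}}$ from the identity $S^2_{n-1}(\lambda)-\lambda S_{n-1}(\lambda)S_{n-2}(\lambda)+S^2_{n-2}(\lambda)=1$, the factorization of $\alpha^2-\alpha\lambda+1$ in Lemma \ref{al}, and the Chebyshev bound $|S_{j-1}(z)|\le |j|$ for $|z|\le 2$. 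Your quadratic $Q^2-4n^2Q-16m(m+1)n^2=0$ is reverse-engineered from the answer rather than derived from $\phi_K$; without the inequality argument above (or a genuine substitute) the proposal does not close. The remaining ingredients you list (reality of the coefficients in $x$, irreducibility, lifting to $\widetilde{SU(1,1)}$ via vanishing Euler class) are handled exactly as in the paper and are fine.
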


\begin{remark}
We exclude $J(2m+1,2)$, for $m > 0$, from Theorem \ref{main2} since it is isomorphic to $J(2m,-2)$, and by Remark \ref{r1} the group $\pi_1(X^{(r)}_{J(2m,-2)})$, for $m>0$, is not left-orderable for any integer $r > 1$.
\end{remark}

Here is the plan of the paper. We study non-abelian $SL_2(\BC) $ representations and roots of the Riley polynomial of the knot group of the double twist knots $J(k,l)$ in Section 2. We prove Theorems \ref{main} and \ref{main2} in Section 3.

\section{Non-abelian representations and roots of the Riley polynomial}

\begin{figure}
\setlength{\unitlength}{0.09mm}
\thicklines{
\begin{picture}(300,460)(80,-20)
\put(0,0){\line(0,1){440}}
\put(0,0){\line(1,0){125}}
\put(100,100){\line(1,0){25}}
\put(100,100){\line(0,1){125}}
\put(125,-25){\line(1,0){150}}
\put(125,-25){\line(0,1){150}}
\put(125,125){\line(1,0){150}}
\put(275,-25){\line(0,1){150}}
\put(100,225){\line(1,0){50}}
\put(150,225){\line(0,1){25}}
\put(125,250){\line(1,0){150}}
\put(125,250){\line(0,1){150}}
\put(125,400){\line(1,0){150}}
\put(275,400){\line(0,-1){150}}
\put(0,440){\line(1,0){150}}
\put(150,440){\line(0,-1){40}}
\put(190,30){{\large$l$}}
\put(185,310){\large{$k$}}
\put(250,400){\line(0,1){40}}
\put(250,440){\line(1,0){150}}
\put(400,440){\line(0,-1){440}}
\put(275,0){\line(1,0){125}}
\put(250,225){\line(1,0){50}}
\put(250,225){\line(0,1){25}}
\put(300,100){\line(0,1){125}}
\put(300,100){\line(-1,0){25}}
\end{picture}
}
\caption{The double twist knot $J(k,l)$. Here $k,l$ denote 
the numbers of half twists in each box. Positive numbers correspond 
to right-handed twists and negative numbers correspond to left-handed 
twists. }
\end{figure}
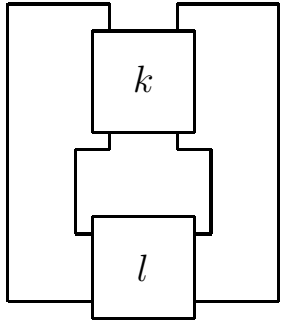

\subsection{Non-abelian representations} By \cite{HSn}, the knot group of $K=J(k,2n)$ is 
$$\pi_1(K) = \la a,b~|~w^na=bw^n \ra,$$ where $a,b$ are meridians and
$$w = 
\begin{cases} 
(ba^{-1})^m(b^{-1}a)^m, & k=2m,\\
(ba^{-1})^mba(b^{-1}a)^m, & k=2m+1.
\end{cases}$$

A representation $\rho: \pi_1(K) \to SL_2(\BC)$ is called non-abelian if 
$\rho(\pi_1(K))$ is a non-abelian subgroup of $SL_2(\BC)$. 
Taking conjugation if necessary, we can assume that $\rho$ 
has the form 
\begin{equation}
\rho(a)=A=\left[ \begin{array}{cc}
s & 1\\
0 & s^{-1} \end{array} \right] \quad \text{and} \quad \rho(b)=B=\left[ \begin{array}{cc}
s & 0\\
2-y & s^{-1} \end{array} \right]
\label{nonabelian}
\end{equation}
where $(s,y) \in \BC^* \times \BC$ satisfies the matrix equation $W^nA-BW^n=0$. Here $W=\rho(w)$.

Let $\{S_j(z)\}_j$ be the sequence of Chebyshev polynomials defined by $S_0(z)=1,\,S_1(z)=z$, and $S_{j+1}(z)=zS_j(z)-S_{j-1}(z)$ for all integers $j$. Note that if $z=t+t^{-1}$, where $t \not= \pm 1$, then $S_{j-1}(t)=\frac{t^j-t^{-j}}{t-t^{-1}}$. Moreover $S_{j-1}(2)=j$ and $S_{j-1}(-2)=(-1)^{j-1}j$ for all integers $j$.

\medskip

The following lemma is standard.

\begin{lemma}
\label{S}
For all integers $j$, one has $$S^2_j(z)-zS_j(z)S_{j-1}(z)+S^2_{j-1}(z)=1.$$
\end{lemma}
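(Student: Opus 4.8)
The plan is to prove this by induction on $j$, exploiting the fact that the left-hand side is invariant under the index shift $j \mapsto j+1$. Set
$$f(j) := S^2_j(z) - z\,S_j(z)S_{j-1}(z) + S^2_{j-1}(z),$$
and aim to show $f(j) = 1$ for every integer $j$. Since the statement is symmetric in the pair $(S_j, S_{j-1})$ and the recurrence $S_{j+1} = zS_j - S_{j-1}$ holds for all integers $j$, it suffices to establish one base value together with the shift-invariance $f(j+1) = f(j)$.

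For the base case I would take $j = 1$ and use $S_0 = 1$, $S_1 = z$, giving $f(1) = z^2 - z\cdot z \cdot 1 + 1 = 1$. (Equivalently one may start at $j = 0$ after recording that the recurrence $S_1 = zS_0 - S_{-1}$ forces $S_{-1} = 0$, so that $f(0) = 1 - 0 + 0 = 1$.) The heart of the argument is the identity $f(j+1) = f(j)$, which uses only the three-term recurrence. Substituting $S_{j+1} = zS_j - S_{j-1}$ into $f(j+1) = S_{j+1}^2 - z S_{j+1} S_j + S_j^2$ and expanding, the terms $z^2 S_j^2$ cancel and one is left precisely with $S_j^2 - z S_j S_{j-1} + S_{j-1}^2 = f(j)$; this is a two-line expansion. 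Because the recurrence is valid for every integer $j$ (so the shift may be run in both directions, rewriting it as $S_{j-1} = zS_j - S_{j+1}$ to decrease the index), propagating from the single value $f(1) = 1$ yields $f \equiv 1$ over all of $\mathbb{Z}$.

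There is essentially no serious obstacle here, the statement being an elementary polynomial identity; the only points deserving a moment's care are (i) running the induction in both the positive and negative directions, which is immediate since the recurrence holds for all integers, and (ii) recording $S_{-1} = 0$ if one prefers the base case $j = 0$.

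As an alternative I could bypass induction entirely: writing $z = t + t^{-1}$ with $t \neq \pm 1$ and using $S_{j-1}(z) = (t^j - t^{-j})/(t - t^{-1})$, the left-hand side becomes $N/(t - t^{-1})^2$, where a direct expansion of the numerator collapses to $N = t^2 - 2 + t^{-2} = (t - t^{-1})^2$, giving the value $1$. Since this holds for infinitely many values of $z$ and both sides are polynomials in $z$, the identity follows for all $z$ and all integers $j$. I would favor the inductive proof as the cleaner and more self-contained route, reserving the closed-form computation as a cross-check.
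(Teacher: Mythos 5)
Your proof is correct. The paper itself gives no argument for this lemma---it simply labels it ``standard'' and moves on---so there is nothing to compare against; your two-directional induction (with the verified base case $f(1)=1$ and the shift-invariance $f(j+1)=f(j)$ obtained from the three-term recurrence, which indeed runs backwards as well since $S_{j-1}=zS_j-S_{j+1}$) is a complete and appropriate justification, and your closed-form cross-check via $z=t+t^{-1}$ is also sound.
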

Let $x=\tr A =s+s^{-1}$ and $\lambda = \tr W$. The following propositions are proved in \cite{MT}.

\begin{proposition}
\label{p1}
One has
$$
\lambda =
\begin{cases}
2+(y-2)(y+2-x^2)S^2_{m-1}(y), & k=2m,\\
x^2-y-(y-2)(y+2-x^2)S_m(y)S_{m-1}(y), & k=2m+1.
\end{cases}
$$
\end{proposition}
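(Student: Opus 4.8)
The plan is to reduce both cases to a single trace computation in $SL_2(\BC)$ and then simplify using the Chebyshev recursion together with Lemma~\ref{S}. First I would set $P=\rho(ba^{-1})=BA^{-1}$ and $Q=\rho(b^{-1}a)=B^{-1}A$ and compute these two matrices explicitly from \eqref{nonabelian}; a direct multiplication gives
$$P=\begin{bmatrix} 1 & -s \\ (2-y)s^{-1} & y-1\end{bmatrix}, \qquad Q=\begin{bmatrix} 1 & s^{-1} \\ (y-2)s & y-1\end{bmatrix},$$
so that $\tr P=\tr Q=y$ and $\det P=\det Q=1$. The key structural point is that, having determinant $1$ and trace $y$, each of $P,Q$ satisfies $P^2=yP-I$ by Cayley--Hamilton, whence by induction $P^m=S_{m-1}(y)P-S_{m-2}(y)I$ and likewise for $Q^m$. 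This converts all the twisting into Chebyshev polynomials evaluated at $y$ and leaves only a bounded number of short traces to evaluate by hand.

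For the even case $k=2m$ one has $W=P^mQ^m$. Expanding with the Cayley--Hamilton formula and taking traces (using $\tr P=\tr Q=y$ and $\tr I=2$) gives $\lambda=S_{m-1}^2(y)\,\tr(PQ)-2y\,S_{m-1}(y)S_{m-2}(y)+2S_{m-2}^2(y)$. The only genuinely new quantity is $\tr(PQ)$, which a direct computation (using $s^2+s^{-2}=x^2-2$) evaluates to $y^2-2-(y-2)x^2$. Substituting the identity $2S_{m-2}^2=2-2S_{m-1}^2+2yS_{m-1}S_{m-2}$, which is Lemma~\ref{S} with $j=m-1$, cancels the middle term and collapses the expression to $2+S_{m-1}^2(y)\bigl(y^2-4-(y-2)x^2\bigr)$; factoring $y^2-4=(y-2)(y+2)$ then yields the stated $2+(y-2)(y+2-x^2)S_{m-1}^2(y)$.

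For the odd case $k=2m+1$ one has $W=P^m(BA)Q^m$. Writing $R=BA$ and expanding as before, the trace becomes $\lambda=S_{m-1}^2(y)\,\tr(PRQ)-S_{m-1}(y)S_{m-2}(y)\bigl(\tr(PR)+\tr(RQ)\bigr)+S_{m-2}^2(y)\,\tr R$. Here I would compute the four short traces directly: $\tr R=x^2-y$, then $\tr(PR)=\tr(RQ)=(y-1)x^2+2-y^2$ (equal by the symmetry $P\leftrightarrow Q$), and finally the four-letter trace $\tr(PRQ)=(y-1)^2x^2+3y-y^3$. Grouping by powers of $x^2$, the coefficient of $x^2$ is $(y-1)^2S_{m-1}^2-2(y-1)S_{m-1}S_{m-2}+S_{m-2}^2=\bigl((y-1)S_{m-1}-S_{m-2}\bigr)^2=(S_m-S_{m-1})^2$, using $S_m=yS_{m-1}-S_{m-2}$; Lemma~\ref{S} with $j=m$ rewrites this as $1+(y-2)S_mS_{m-1}$. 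For the constant part I would substitute $S_{m-2}=yS_{m-1}-S_m$ throughout, collect the result to $-y(S_{m-1}^2+S_m^2)+4S_mS_{m-1}$, and again apply Lemma~\ref{S} in the form $S_{m-1}^2+S_m^2=1+yS_mS_{m-1}$ to reach $-y-(y^2-4)S_mS_{m-1}$. Reassembling and factoring $(y-2)(y+2-x^2)$ recovers the claimed $x^2-y-(y-2)(y+2-x^2)S_m(y)S_{m-1}(y)$.

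I expect the main obstacle to be the bookkeeping in the odd case: the four-letter trace $\tr(BA^{-1}\cdot BA\cdot B^{-1}A)$ is the heaviest single computation, and the final form emerges only after one recognizes that the $x^2$-coefficient is a perfect square and passes from the $(S_{m-1},S_{m-2})$ basis to the $(S_{m-1},S_m)$ basis via the Chebyshev recursion. The even case is comparatively short, its one subtlety being the well-timed use of Lemma~\ref{S} to eliminate the cross term.
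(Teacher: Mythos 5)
Your computation is correct: I checked the matrices $P=BA^{-1}$, $Q=B^{-1}A$, $R=BA$, the Cayley--Hamilton expansion $P^m=S_{m-1}(y)P-S_{m-2}(y)I$, and all four traces $\tr(PQ)=y^2-2-(y-2)x^2$, $\tr R=x^2-y$, $\tr(PR)=\tr(RQ)=(y-1)x^2+2-y^2$, and $\tr(PRQ)=(y-1)^2x^2+3y-y^3$, and the subsequent Chebyshev bookkeeping in both cases reassembles to the stated formulas. Note, however, that the paper itself offers no proof of this proposition to compare against: it simply cites \cite{MT} (``The following propositions are proved in [MT]''), so your argument is not an alternative to an in-paper proof but a self-contained verification of an imported result. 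The route you take --- reducing $W$ to a word in $P$, $Q$, $R$, linearizing powers via Cayley--Hamilton, and evaluating a handful of short traces --- is the standard one for such trace identities and is essentially what one finds in the cited source; the one place where care is genuinely needed is the four-letter trace $\tr(PRQ)$ and the change of basis from $(S_{m-1},S_{m-2})$ to $(S_{m-1},S_m)$, both of which you handle correctly.
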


\begin{proposition}
\label{Riley}
One has
$$
W^nA-BW^n = \left[ \begin{array}{cc}
0 & S_{n-1}(\lambda)\alpha-S_{n-2}(\lambda)\\
(y-2) \left( S_{n-1}(\lambda)\alpha-S_{n-2}(\lambda) \right) & 0 \end{array} \right],
$$
where 
$$\alpha = \begin{cases}
1-(y+2-x^2)S_{m-1}(y) \left( S_{m-1}(y) - S_{m-2}(y) \right), & k=2m,\\
1+(y+2-x^2)S_{m-1}(y) \left( S_m(y) - S_{m-1}(y) \right), & k=2m+1.
\end{cases}
$$
\end{proposition}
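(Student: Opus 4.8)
The plan is to compute the matrix $W^nA - BW^n$ directly by expressing the $n$-th power $W^n$ in terms of the Chebyshev polynomials $S_j(\lambda)$, where $\lambda = \tr W$. Since $W \in SL_2(\BC)$, the Cayley--Hamilton theorem gives $W^2 - \lambda W + I = 0$, and an easy induction yields the standard identity $W^n = S_{n-1}(\lambda) W - S_{n-2}(\lambda) I$ for all integers $n$. This reduces the problem of computing $W^nA - BW^n$ for arbitrary $n$ to computing $WA - BW$ together with the scalar contributions $S_{n-2}(\lambda)(A - B)$.

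First I would substitute the power formula to obtain
\begin{equation}
W^nA - BW^n = S_{n-1}(\lambda)(WA - BW) - S_{n-2}(\lambda)(A-B).
\label{eq:plan-sub}
\end{equation}
From the explicit forms of $A$ and $B$ in \eqref{nonabelian} one reads off $A - B = \left[\begin{array}{cc} 0 & 1 \\ y-2 & 0 \end{array}\right]$, which already has the off-diagonal shape appearing in the proposition. The heart of the computation is therefore to evaluate $WA - BW$ and to show that it equals $\alpha (A-B)$ for the quantity $\alpha$ specified in the statement; granting this, substituting into \eqref{eq:plan-sub} immediately gives $W^nA - BW^n = \bigl(S_{n-1}(\lambda)\alpha - S_{n-2}(\lambda)\bigr)(A - B)$, which is exactly the claimed matrix.

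To evaluate $WA - BW$ I would compute $W = \rho(w)$ entrywise using the two product formulas for $w$ in the cases $k = 2m$ and $k = 2m+1$. The building block is $\rho(ba^{-1})$ and $\rho(b^{-1}a)$, each a product of the explicit matrices $A$ and $B$; their powers $(ba^{-1})^m$ and $(b^{-1}a)^m$ are again handled by Cayley--Hamilton and the Chebyshev polynomials $S_{m-1}(y)$, since $\tr(BA^{-1}) = y$ (this is where the variable $y$ and the polynomials $S_{m-1}(y)$, $S_{m-2}(y)$, $S_m(y)$ enter). I expect the main obstacle to be the bookkeeping in these $2 \times 2$ matrix products: one must carry the entries of $W$ through the multiplication $WA - BW$, repeatedly use Lemma~\ref{S} to collapse expressions like $S_{m-1}^2(y) - y S_{m-1}(y)S_{m-2}(y) + S_{m-2}^2(y)$ to constants, and verify that the diagonal entries of $WA - BW$ cancel while the off-diagonal entries factor as $\alpha$ times $1$ and $\alpha$ times $(y-2)$ respectively. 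Since the propositions are attributed to \cite{MT}, I would organize this as a careful verification: establish the two-case formula for the entries of $W$, form $WA - BW$, and simplify using Lemma~\ref{S} until the factor $\alpha$ emerges in precisely the stated form for each parity of $k$.
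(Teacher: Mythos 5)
The paper gives no proof of this proposition at all --- it defers it to \cite{MT} --- so there is no in-paper argument to compare against; your reduction via Cayley--Hamilton, $W^n = S_{n-1}(\lambda)W - S_{n-2}(\lambda)I$, to the single identity $WA - BW = \alpha(A-B)$ is correct (and consistent with the $n=1$ case of the statement), and it is exactly the standard route taken in that reference. The only substantive work you leave undone is the entrywise verification that the diagonal of $WA-BW$ vanishes and that its off-diagonal entries are $\alpha$ and $(y-2)\alpha$ for the two parities of $k$, which is routine Chebyshev bookkeeping using $\tr(BA^{-1})=\tr(B^{-1}A)=y$ and Lemma \ref{S}, as you indicate.
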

Proposition \ref{Riley} implies that the assignment \eqref{nonabelian} gives a non-abelian representation 
$\rho: \pi_1(K) \to SL_2(\BC)$ 
%$\rho: \pi_1(K) \to SL_2(\BC)$ 
if and only if $(s,y) \in \BC^* \times \BC$ satisfies the equation 
$$\phi_K(x,y):= S_{n-1}(\lambda)\alpha-S_{n-2}(\lambda)=0.$$
The polynomial $\phi_{K}(x,y)$ is known as the Riley polynomial \cite{Ri} of $K=J(k,2n)$.

\subsection{Roots of the Riley polynomial} In this subsection we prove some properties of the roots of the Riley polynomial of the double twist knots $J(k,l)$.

\begin{lemma}
\label{al}
One has $$\alpha^2-\alpha\lambda+1=
\begin{cases}
(y+2-x^2)S^2_{m-1}(y)(\lambda+2-x^2), & k=2m,\\
(1+(y+2-x^2)S_{m-1}(y)S_m(y))(2-\lambda), & k=2m+1.
\end{cases}$$
\end{lemma}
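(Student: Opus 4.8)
The plan is to treat the asserted equalities as polynomial identities in $y$ (with $x$, or rather $x^2$, a parameter) and verify them by direct substitution of the explicit formulas for $\lambda$ and $\alpha$ from Propositions~\ref{p1} and \ref{Riley}, reducing each case to a single application of Lemma~\ref{S}. It is convenient to abbreviate $P:=y+2-x^2$, so that $\alpha$, $\lambda$ and the claimed right-hand sides are all polynomials in $P$ and in the $S_j(y)$; I will also use the relations $2-x^2=P-y$ and $x^2-y=2-P$ that follow at once from the definition of $P$. Throughout I write $S_j$ for $S_j(y)$.

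For the case $k=2m$, I would substitute $\alpha=1-PS_{m-1}(S_{m-1}-S_{m-2})$ and $\lambda=2+(y-2)PS_{m-1}^2$ and expand $\alpha^2-\alpha\lambda+1$. The crucial first observation is that the terms linear in $\alpha$ cancel (the $-2\alpha\cdot 1$ coming from $\alpha^2$ against the $-\alpha\cdot 2$ coming from $\alpha\lambda$, more precisely the two copies of $2PS_{m-1}(S_{m-1}-S_{m-2})$), leaving an expression from which the factor $PS_{m-1}^2$ can be pulled out. Setting $D:=S_{m-1}-S_{m-2}$, the remaining cofactor is $PD^2+(y-2)(PS_{m-1}D-1)$, and rewriting $\lambda+2-x^2=P-(y-2)+(y-2)PS_{m-1}^2$ reduces the whole claim to the single polynomial identity $D^2+(y-2)S_{m-1}D=1+(y-2)S_{m-1}^2$. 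Expanding $D$ collapses this to $S_{m-1}^2-yS_{m-1}S_{m-2}+S_{m-2}^2=1$, which is exactly Lemma~\ref{S} with $j=m-1$.

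For the case $k=2m+1$, I would first use $x^2-y=2-P$ to rewrite $\lambda=2-P\bigl(1+(y-2)S_mS_{m-1}\bigr)$, so that $2-\lambda=P\bigl(1+(y-2)S_mS_{m-1}\bigr)$; this is the step that exposes the target factor $(1+PS_{m-1}S_m)$ on the right-hand side. Substituting $\alpha=1+PS_{m-1}(S_m-S_{m-1})$ and expanding $\alpha^2-\alpha\lambda+1$, one can factor out $P$ and then $S_{m-1}$, after which the difference between the two sides becomes $P^2S_{m-1}^2\bigl((S_m-S_{m-1})^2-(1+(y-2)S_mS_{m-1})\bigr)$ once one uses $(S_m-S_{m-1})-S_m=-S_{m-1}$ to simplify. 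The bracket expands to $S_m^2-yS_mS_{m-1}+S_{m-1}^2-1$, which vanishes by Lemma~\ref{S} with $j=m$.

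The computation is elementary, so the only real obstacle is bookkeeping: arranging the expansion so that the common factor ($PS_{m-1}^2$ in the even case, $PS_{m-1}$ in the odd case) is visible before any cancellation is attempted, and, in the odd case, rewriting $\lambda$ via $x^2-y=2-P$ at the outset so that $2-\lambda$ factors through $P$. Once the expression is organized in this way, each case terminates in one invocation of Lemma~\ref{S}, and I do not anticipate any conceptual difficulty beyond keeping the Chebyshev indices straight.
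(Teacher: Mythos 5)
Your proposal is correct and follows essentially the same route as the paper: both proofs are direct polynomial expansions of $\alpha^2-\alpha\lambda+1$ that isolate the common factor ($(y+2-x^2)S_{m-1}^2$ in the even case, $(y+2-x^2)$ times a Chebyshev expression in the odd case) and then close with a single application of Lemma~\ref{S} (at $j=m-1$ for $k=2m$ and at $j=m$ for $k=2m+1$). Your bookkeeping via $P=y+2-x^2$ and the rewriting $2-\lambda=P\bigl(1+(y-2)S_mS_{m-1}\bigr)$ is just a tidier organization of the same computation, and all the intermediate identities you state check out.
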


\begin{proof}
If $k=2m$ then $\alpha=1-(y+2-x^2)S_{m-1}(y) \left( S_{m-1}(y) - S_{m-2}(y) \right)$ and $\lambda=2+(y-2)(y+2-x^2)S^2_{m-1}(y)$. By direct calculations, we have
\begin{eqnarray*}
\alpha^2-\alpha\lambda+1 &=& (y+2-x^2)S^2_{m-1}(y)\\
&&\left[ 2-y+(y+2-x^2) \left( (y-1)S^2_{m-1}(y)-yS_{m-1}(y)S_{m-2}(y)+S^2_{m-2}(y) \right) \right].
\end{eqnarray*}
Since $S^2_{m-1}(y)-yS_{m-1}(y)S_{m-2}(y)+S^2_{m-2}(y)=1$ (by Lemma \ref{S}), we obtain
\begin{eqnarray*}
\alpha^2-\alpha\lambda+1 &=&(y+2-x^2)S^2_{m-1}(y) \left( 4-x^2+(y+2-x^2)(y-2)S^2_{m-1}(y) \right)\\
&=& (y+2-x^2)S^2_{m-1}(y)(\lambda+2-x^2).
\end{eqnarray*}

If $k=2m+1$ then $\alpha=1+(y+2-x^2)S_{m-1}(y) \left( S_m(y) - S_{m-1}(y) \right)$ and $\lambda=x^2-y-(y-2)(y+2-x^2)S_m(y)S_{m-1}(y)$. By direct calculations, we have
\begin{eqnarray*}
\alpha^2-\alpha\lambda+1 &=& (y+2-x^2) \big[ 1-(y+2-x^2)S^2_{m-1}(y)+(2y-x^2)S_{m-1}(y)S_{m}(y)\\
&&+\,(y+2-x^2)S^2_{m-1}(y)\left( S^2_{m-1}(y)-yS_{m-1}(y)S_{m}(y)+(y-1)S^2_{m}(y) \right) \big].
\end{eqnarray*}
Since $S^2_{m-1}(y)-yS_{m-1}(y)S_{m-2}(y)+S^2_{m-2}(y)=1$, we obtain
\begin{eqnarray*}
&& \alpha^2-\alpha\lambda+1\\
 &=&(y+2-x^2) [1+(2y-x^2)S_{m-1}(y)S_{m}(y)+(y+2-x^2)(y-2)S^2_{m-1}(y)S^2_{m}(y)]\\
&=& (y+2-x^2) \left( 1+(y+2-x^2)S_{m-1}(y)S_m(y) \right) \left( 1+(y-2)S_{m-1}(y)S_m(y) \right)\\
&=& (1+(y+2-x^2)S_{m-1}(y)S_m(y))(2-\lambda).
\end{eqnarray*}
This completes the proof of Lemma \ref{al}.
\end{proof}

The following lemma is well known. We include a proof for the reader's convenience.

\begin{lemma}
\label{sin}
For any integer $k$ and any real number $t$, one has $$|\sin kt| \le |k\sin t|.$$
\end{lemma}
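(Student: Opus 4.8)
The plan is to prove the inequality by induction on $k$, after first reducing to the case $k \ge 0$. Since $\sin$ is an odd function, $|\sin(kt)| = |\sin((-k)t)|$, and trivially $|k \sin t| = \big|\,|{-k}|\sin t\,\big|$; hence it suffices to treat nonnegative integers $k$. The base cases are immediate: when $k=0$ both sides vanish, and when $k=1$ both sides equal $|\sin t|$.

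For the inductive step I would invoke the angle-addition formula
$$\sin((k+1)t) = \sin(kt)\cos t + \cos(kt)\sin t.$$
Applying the triangle inequality together with the bounds $|\cos(kt)| \le 1$ and $|\cos t| \le 1$ gives
$$|\sin((k+1)t)| \le |\sin(kt)|\,|\cos t| + |\cos(kt)|\,|\sin t| \le |\sin(kt)| + |\sin t|.$$
The inductive hypothesis $|\sin(kt)| \le k\,|\sin t|$ then yields $|\sin((k+1)t)| \le (k+1)\,|\sin t|$, which closes the induction.

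I do not anticipate any genuine obstacle: the whole argument rests on the addition formula and the elementary fact that $|\cos|$ is bounded by $1$. The only points requiring a little care are the initial reduction to $k \ge 0$, which uses the oddness of $\sin$, and fixing a valid base case so that the inductive step is well-founded.
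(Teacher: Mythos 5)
Your proof is correct, and it takes a genuinely different route from the paper. You argue by induction on $k\ge 0$ (after reducing to nonnegative $k$ via the oddness of $\sin$), using the addition formula $\sin((k+1)t)=\sin(kt)\cos t+\cos(kt)\sin t$ together with $|\cos|\le 1$ and the triangle inequality. The paper instead gives a direct, non-inductive argument: it splits into the cases $k=2m$ and $k=2m+1$, writes $\sin kt$ as a telescoping sum of differences $\sin(2jt)-\sin((2j-2)t)$ (resp.\ $\sin((2j+1)t)-\sin((2j-1)t)$), and converts each difference by a sum-to-product identity to obtain $\sin kt = 2\sin t\sum_{j=1}^m\cos((2j-1)t)$ (resp.\ $\sin t\bigl(1+2\sum_{j=1}^m\cos 2jt\bigr)$), from which the bound follows by estimating each cosine by $1$. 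Your induction is the more standard and arguably more economical argument, requiring only one trigonometric identity and no case split on the parity of $k$; the paper's computation has the side benefit of exhibiting the explicit factorization of $\sin kt$ as $\sin t$ times a cosine sum (a Dirichlet-kernel--type identity), which makes the inequality visibly sharp in form. Both proofs are complete and correct.
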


\begin{proof}
Without loss of generality we assume that $k \ge 2$. If $k=2m$ ($m \in \BZ_+$) then
$$\sin kt =\sum_{j=1}^m \left( \sin 2jt-\sin (2j-2)t \right)=2\sin t \sum_{j=1}^m \cos (2j-1)t.$$
It follows that $|\sin kt|\le 2m |\sin t|$, since $\left| \sum_{j=1}^m \cos (2j-1)t \right| \le m$.

If $k=2m+1$ ($m \in \BZ_+$) then
$$\sin kt =\sin t + \sum_{j=1}^m \left( \sin (2j+1)t-\sin (2j-1)t \right)=\sin t+2\sin t \sum_{j=1}^m \cos 2jt.$$
It follows that $|\sin kt|\le (2m+1) |\sin t|$, since $\left| 1+2\sum_{j=1}^m \cos 2jt \right| \le 2m+1$.
\end{proof}

\begin{lemma}
\label{S2}
Suppose $z \in \BR$ satisfies $|z| \le 2$. Then $|S_{j-1}(z)| \le |j|$ for all integers $j$.
\end{lemma}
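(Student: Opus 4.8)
The plan is to reduce the stated bound to the trigonometric inequality already established in Lemma \ref{sin}, by parametrizing $z$ trigonometrically. Since $z$ is real with $|z| \le 2$, I would first write $z = 2\cos\theta$ for some $\theta \in [0,\pi]$, which is possible precisely because $z/2 \in [-1,1]$. Setting $t = e^{i\theta}$ gives $z = t + t^{-1}$, so I can invoke the closed form recorded in the excerpt, namely $S_{j-1}(z) = \frac{t^j - t^{-j}}{t - t^{-1}}$, which is valid whenever $t \neq \pm 1$.

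For $0 < \theta < \pi$ we have $t \neq \pm 1$ and $\sin\theta \neq 0$, and the closed form becomes
$$S_{j-1}(z) = \frac{e^{ij\theta} - e^{-ij\theta}}{e^{i\theta} - e^{-i\theta}} = \frac{\sin j\theta}{\sin\theta}.$$
Then Lemma \ref{sin}, applied with $k=j$ and real number $\theta$, gives $|\sin j\theta| \le |j||\sin\theta|$, and dividing by $|\sin\theta| > 0$ yields $|S_{j-1}(z)| \le |j|$, as desired. This settles every $z$ with $|z| < 2$, and it covers all integers $j$ (including negative ones), since both the closed form and Lemma \ref{sin} are stated for all integers.

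The only values the closed form does not reach are the endpoints $\theta = 0$ and $\theta = \pi$, that is $z = \pm 2$, where $\sin\theta = 0$ and the expression $\frac{\sin j\theta}{\sin\theta}$ is an indeterminate $0/0$. This is the single point where a little care is needed, and I expect it to be the only mild obstacle; it is dispatched immediately using the explicit values recalled earlier in the excerpt, $S_{j-1}(2) = j$ and $S_{j-1}(-2) = (-1)^{j-1}j$, each of which has absolute value exactly $|j|$, so the inequality holds (with equality) at the endpoints as well. In summary, the substantive content lives entirely in Lemma \ref{sin}, and the present lemma is just its translation through the substitution $z = 2\cos\theta$ together with a direct check of the two boundary cases.
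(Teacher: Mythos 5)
Your proof is correct and follows exactly the paper's own argument: parametrize $z=2\cos\theta$, identify $S_{j-1}(z)=\frac{\sin j\theta}{\sin\theta}$ for $0<\theta<\pi$, apply Lemma \ref{sin}, and check $z=\pm 2$ separately via the explicit values $S_{j-1}(2)=j$ and $S_{j-1}(-2)=(-1)^{j-1}j$. No issues.
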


\begin{proof}
If $z=2$ then $S_{j-1}(z)=j$. If $z=-2$ then $S_{j-1}(z)=(-1)^{j-1} j$. 

If $-2<z<2$ we write $z=2\cos t$, where $0<t<\pi$. Then $S_{j-1}(z)=\frac{\sin jt}{\sin t}$, and hence $|S_{j-1}(z)| \le |j|$ by Lemma \ref{sin}.
\end{proof}

\begin{proposition}
Let $K=J(k,2n)$ where $k>0$ and $|n|>0$. Suppose $x, y \in \BR$ satisfy $|x| \le 2$ and $\phi_K(x,y)=0$. Then $y>2$ if one of the following holds:

(a) $k=2m$ ($m \in \BZ_+$) and $|x| > 2 \, \sqrt{1-\frac{1}{|4mn|}}$.

(b) $k=2m+1$ ($m \in \BZ_+$) and $|x| > 2 \, \sqrt{1-\frac{1}{2n^2+2|n| \sqrt{4m(m+1)+n^2}}}$.
\label{y2}
\end{proposition}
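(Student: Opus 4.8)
The plan is to turn the Riley equation into a single strictly positive identity and then squeeze it between Chebyshev-type bounds from Lemmas \ref{S} and \ref{S2}. First I would record that $\phi_K=0$ forces $S_{n-1}(\lambda)\neq0$: otherwise $\phi_K=-S_{n-2}(\lambda)$ gives $S_{n-2}(\lambda)=0$ as well, and Lemma \ref{S} with $j=n-1$ yields $0=1$. Hence $\alpha=S_{n-2}(\lambda)/S_{n-1}(\lambda)$, and substituting this into $\alpha^2-\alpha\lambda+1$ and using Lemma \ref{S} once more gives the master identity $\alpha^2-\alpha\lambda+1=1/S_{n-1}^2(\lambda)>0$. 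Comparing with Lemma \ref{al} produces the working identity; writing $u=y+2-x^2$ and $V=S_{m-1}(y)S_m(y)$, it reads $uS_{m-1}^2(y)(\lambda+2-x^2)=1/S_{n-1}^2(\lambda)$ when $k=2m$, and $(1+uV)(2-\lambda)=1/S_{n-1}^2(\lambda)$ when $k=2m+1$.

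I would prove the contrapositive: assuming $y\le2$, I would show $|x|$ is at most the stated threshold. The key preliminary is to trap $y$ and $\lambda$ in $[-2,2]$ so that Lemma \ref{S2} applies. For $k=2m$, positivity of the right-hand side forces $S_{m-1}(y)\neq0$ and $u(\lambda+2-x^2)>0$; the sign pattern $u<0,\ \lambda+2-x^2<0$ is impossible, since $\lambda=2+(y-2)uS_{m-1}^2(y)$ would then be $\ge2$ whereas $\lambda+2-x^2<0$ forces $\lambda<x^2-2\le2$. Thus $u>0$ and $\lambda+2-x^2>0$, which give $y>x^2-2\ge-2$ and $\lambda>x^2-2\ge-2$, while $\lambda\le2$ is immediate. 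For $k=2m+1$ I would use the two identities $2-\lambda=u(1+(y-2)V)$ and $\lambda+2=x^2+(2-y)(1+uV)$ (both read off from the computation proving Lemma \ref{al}). First $y<-2$ is excluded: there $u<0$ and $V<0$, so the triple product $u(1+uV)(1+(y-2)V)$ is negative, contradicting positivity; hence $-2\le y\le2$. Then $(1+uV)(2-\lambda)>0$ leaves two sign patterns, and the pattern $1+uV<0,\ 2-\lambda<0$ is ruled out because $\lambda+2=x^2+(2-y)(1+uV)\le x^2\le4$ forces $\lambda\le2$; so $1+uV>0$, $\lambda<2$, and $\lambda+2=x^2+(2-y)(1+uV)\ge0$ gives $\lambda\ge-2$.

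With $|y|\le2$ and $|\lambda|\le2$ in hand, Lemma \ref{S2} gives $S_{m-1}^2(y)\le m^2$, $|V|\le m(m+1)$ and $1/S_{n-1}^2(\lambda)\ge1/n^2$. For $k=2m$ the bound is immediate: $u\le4-x^2$ and $\lambda+2-x^2\le4-x^2$, so $1/n^2\le m^2(4-x^2)^2$, giving $4-x^2\ge1/(m|n|)$ and hence $|x|\le2\sqrt{1-1/|4mn|}$, the desired contradiction. For $k=2m+1$ I would set $d=4-x^2$, write $u=d-(2-y)$, and regard $u(1+uV)(1+(y-2)V)$ as a downward-opening parabola in $V$; maximizing it over $|V|\le m(m+1)$ --- the maximum being approached as $y\to2$, $V\to m(m+1)$ --- yields $1/S_{n-1}^2(\lambda)\le d\,(1+m(m+1)d)$. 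Combined with the lower bound $1/n^2$, this is the quadratic inequality $m(m+1)n^2d^2+n^2d-1\ge0$, whose positive root equals $4/Q$ by the relation $Q^2=4n^2Q+16m(m+1)n^2$. Therefore $d\ge4/Q$, i.e. $|x|\le2\sqrt{1-1/Q}$.

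I expect the sign bookkeeping of the second paragraph to be the main obstacle --- in particular excluding $\lambda>2$ in the odd case, for which the identity $\lambda+2=x^2+(2-y)(1+uV)$ is exactly what is needed --- together with the verification that the constrained maximum of the parabola in $V$ is attained at the endpoint $V=m(m+1)$, so that the clean bound $d(1+m(m+1)d)$ rather than the unconstrained vertex value $d^2/\bigl(4(2-y)\bigr)$ governs the estimate. Once these are settled, the remaining steps are routine substitutions and direct appeals to Lemmas \ref{S} and \ref{S2}.
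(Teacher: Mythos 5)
Your proposal is essentially the paper's proof: the same master identity $(\alpha^2-\alpha\lambda+1)S_{n-1}^2(\lambda)=1$ combined with Lemma \ref{al}, the same contradiction hypothesis $y\le 2$ with a sign analysis trapping $y$ and $\lambda$ in $[-2,2]$, the same appeal to Lemma \ref{S2}, and the same terminal inequality $1\le (4-x^2)^2m^2n^2$ in case (a) and $m(m+1)n^2d^2+n^2d-1\ge 0$ with $d=4-x^2$ in case (b). A minor bonus of your formulation is that the boundary case $|x|=2$ falls out of the same argument, whereas the paper disposes of it separately by citing \cite[Prop.~3.2]{MT}. The one place you depart from the paper is the final estimate in the odd case, and it is also the one loose end you leave: you propose to maximize $u(1+uV)(1+(y-2)V)$ as a parabola in $V$ over $|V|\le m(m+1)$ and concede that the claim that the maximum sits at the endpoint $V=m(m+1)$ is unverified. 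That worry is real (the vertex value $d^2/(4(2-y))$ blows up as $y\to 2$, so the endpoint claim needs a separate argument about where the vertex lies), but the whole detour is unnecessary: your own identity $\lambda+2=x^2+(2-y)(1+uV)$, together with the already-established $1+uV>0$ and $y\le 2$, gives $\lambda\ge x^2-2$, hence $2-\lambda\le 4-x^2=d$; multiplying by $1+uV\le 1+m(m+1)d$ (using $0<u\le d$ and $|V|\le m(m+1)$ from Lemma \ref{S2}) yields $1/S_{n-1}^2(\lambda)=(2-\lambda)(1+uV)\le d\bigl(1+m(m+1)d\bigr)$ in one line, which is exactly how the paper concludes. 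With that substitution your argument is complete and coincides with the published one.
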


\begin{proof}  If $|x|=2$ then by \cite[Prop. 3.2]{MT}, any real root $y$ of $\phi_K(x,y)$ satisfies $y>2$. We now consider the case $|x|<2$.

Suppose $x, y \in \BR$ satisfy $|x| < 2$ and $\phi_K(x,y)=0$. Then $S_{n-1}(\lambda)\alpha - S_{n-2}(\lambda)=0$ and 
\begin{equation}
\label{root}
1 = S^2_{n-1}(\lambda)-\lambda S_{n-1}(\lambda)S_{n-2}(\lambda)+S^2_{n-2}(\lambda) = \left( \alpha^2-\alpha\lambda+1 \right) S^2_{n-1}(\lambda).
\end{equation}

(a) Suppose $k=2m$ ($m \in \BZ_+$) and $|x| >\sqrt{4-\frac{1}{|mn|}}$. By Lemma \ref{al}, $$\alpha^2-\alpha\lambda+1 = (y+2-x^2)S^2_{m-1}(y)(\lambda+2-x^2).$$ 
Eq. \eqref{root} then implies that 
\begin{equation}
\label{=1}
1=(y+2-x^2)S^2_{m-1}(y)(\lambda+2-x^2)S^2_{n-1}(\lambda).
\end{equation}

Assume  $y \le 2$. Since $\lambda - 2=(y-2)(y+2-x^2)S^2_{m-1}(y),$ by Eq. \eqref{=1} we have
$$(\lambda -2)(\lambda+2-x^2)=(y-2)(y+2-x^2)S^2_{m-1}(y)(\lambda+2-x^2)=(y-2)/S^2_{n-1}(\lambda) \le 0$$ which implies that $x^2-2 < \lambda \le 2$. 

Similarly since $(y-2)(y+2-x^2)=(\lambda - 2)/S^2_{m-1}(y) \le 0$, we have $x^2-2 < y \le 2$. 

Since $y \in \BR$ satisfies $|y| \le 2$, $|S_{m-1}(y)| \le |m|$ by Lemma \ref{S2}. Similarly $|S_{n-1}(\lambda)| \le |n|$. Hence, it follows from Eq. \eqref{=1} that
$$1=(y+2-x^2)(\lambda+2-x^2)S^2_{m-1}(y)S^2_{n-1}(\lambda) \le (4-x^2)^2m^2n^2$$
which implies that $x^2 \le 4-\frac{1}{|mn|}$, a contradiction. 

%Assume $y=2$. Then $\lambda =2+(y-2)(y+2-x^2)S^2_{m-1}(y)=2$. Note that $S_j(2)=j+1$ for all integers $j$. We have 
%$\alpha^2-\alpha\lambda+1 = (4-x^2)^2m^2$ and hence $1=(4-x^2)^2m^2n^2$ i.e. $x^2=4-\frac{1}{|mn|}$.

(b) Suppose $k=2m+1$ ($m \in \BZ_+$) and $|x| > \sqrt{4-\frac{2}{n^2+|n| \sqrt{4m(m+1)+n^2}}}$. By Lemma \ref{al}, $$\alpha^2-\alpha\lambda+1 = (1+(y+2-x^2)S_{m-1}(y)S_m(y))(2-\lambda).$$ Eq. \eqref{root} then implies that
\begin{equation}
\label{=1b}
1=(1+(y+2-x^2)S_{m-1}(y)S_m(y))(2-\lambda)S^2_{n-1}(\lambda).
\end{equation}
%\begin{eqnarray*}
%y-2 &=& (y-2)(1+(y+2-x^2)S_{m-1}(y)S_m(y))(2-\lambda) S^2_{n-1}(\lambda)\\
%&=& (\lambda -2)(\lambda+2-x^2) S^2_{n-1}(\lambda).
%\end{eqnarray*}

Assume that $y \le 2$. Since $\lambda + x^2-2  =(2-y)(1+(y+2-x^2)S_{m-1}(y)S_m(y))$, by Eq. \eqref{=1b} we have
\begin{eqnarray*}
(\lambda+2-x^2)(\lambda -2) &=& (2-y)(1+(y+2-x^2)S_{m-1}(y)S_m(y))(\lambda -2)\\
&=& (y-2)/S^2_{n-1}(\lambda) \le 0
\end{eqnarray*} 
which implies that $x^2-2 \le \lambda < 2$. 

Similarly, since $S^2_{m-1}(y)+S^2_m(y)-yS_{m-1}(y)S_m(y)=1$ we have
\begin{eqnarray*}
2-\lambda &=& (y+2-x^2)(1+(y-2)S_{m-1}(y)S_m(y))\\
&=& (y+2-x^2)(S_{m-1}(y)-S_m(y))^2 > 0
\end{eqnarray*} which implies that $y>x^2-2$. Hence, it follows from Eq. \eqref{=1b} that
\begin{eqnarray*}
1 &=& (2-\lambda)S^2_{n-1}(\lambda) \left( 1+(y+2-x^2)S_{m-1}(y)S_m(y) \right)\\
  &\le& (4-x^2)n^2 \left( 1 + (4-x^2)m(m+1)\right)
\end{eqnarray*} 
which implies that $x^2 \le 4-\frac{2}{n^2+|n|\sqrt{4m(m+1)+n^2} }$, a contradiction.
\end{proof}

\begin{proposition}
\label{riley22}
Let $K=J(2m+1,2n)$ where $m \ge 0$ and $n \not= 0, 1, 2$ are integers. Suppose $x \in \BR$ satisfies $|x| \ge 2 \cos \frac{\pi}{4m+2}$. Then the equation $\phi_K(x,y)=0$ has at least one real solution $y>x^2-2$.
\end{proposition}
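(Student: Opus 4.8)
The plan is to fix $x$ and regard $\phi_K(x,y)=S_{n-1}(\lambda)\alpha-S_{n-2}(\lambda)$ as a continuous polynomial in the single variable $y$ (here $\lambda$ and $\alpha$ are themselves polynomials in $y$ via Proposition \ref{p1} and Proposition \ref{Riley}), and to produce a real root $y>x^2-2$ by the intermediate value theorem. Throughout I will use the identity $2-\lambda=(y+2-x^2)(S_m(y)-S_{m-1}(y))^2$ for $k=2m+1$, which follows from Proposition \ref{p1} together with Lemma \ref{S} exactly as in the proof of Proposition \ref{y2}(b). Its two consequences I need are that $\lambda=2$ at $y=x^2-2$, and that $\lambda\to-\infty$ as $y\to+\infty$, since the polynomial $(y+2-x^2)(S_m(y)-S_{m-1}(y))^2$ has degree $2m+1$ and positive leading coefficient.

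I first evaluate at the left endpoint $y=x^2-2$. There $y+2-x^2=0$, so $\alpha=1$ and $\lambda=x^2-y=2$. Since $S_{n-1}(2)=n$ and $S_{n-2}(2)=n-1$ for every integer $n$, this gives $\phi_K(x,x^2-2)=n\cdot 1-(n-1)=1>0$; note this value does not depend on $x$.

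Next I locate a point where $\phi_K<0$. Assume first $|n|\ge2$ and set $\lambda_0=2\cos\frac{\pi}{|n|}\in(-2,2)$. Because $\lambda$ is continuous in $y$ with $\lambda=2$ at $y=x^2-2$ and $\lambda\to-\infty$ as $y\to+\infty$, the intermediate value theorem yields some $y_0>x^2-2$ with $\lambda(y_0)=\lambda_0$. Writing $\lambda_0=2\cos\theta_0$ with $\theta_0=\frac{\pi}{|n|}\in(0,\pi)$, I get $S_{n-1}(\lambda_0)=\frac{\sin n\theta_0}{\sin\theta_0}=\frac{\sin(\pm\pi)}{\sin\theta_0}=0$, so the uncontrolled factor $\alpha(y_0)$ drops out entirely and $\phi_K(x,y_0)=-S_{n-2}(\lambda_0)=-\frac{\sin(n-1)\theta_0}{\sin\theta_0}=-1<0$ (a one-line check gives $\sin(n-1)\theta_0=\sin\theta_0$ for both signs of $n$). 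Combined with the previous paragraph, the intermediate value theorem produces a root $y^{*}\in(x^2-2,y_0)$, so $y^{*}>x^2-2$ as required. This device of steering $\lambda$ onto a zero $2\cos\frac{\pi}{|n|}$ of $S_{n-1}$, which makes the sign of $\phi_K$ computable without any estimate on $\alpha$, is the step I regard as the crux.

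Two loose ends remain. The degenerate index $n=-1$ (permitted, since only $n=0,1,2$ are excluded) has $2\cos\frac{\pi}{|n|}=-2$, where the trick above breaks down; but in that case $\phi_K=\lambda-\alpha$ directly (using $S_{-2}=-1$ and $S_{-3}(z)=-z$), which equals $1$ at $y=x^2-2$ and tends to $-\infty$ as $y\to+\infty$ (there $\lambda\to-\infty$ while $\alpha$ is either $1$ or $\to+\infty$), so a root again appears. Finally, I observe that the stated lower bound $|x|\ge2\cos\frac{\pi}{4m+2}$ is not actually consumed by this argument: the conclusion holds for every real $x$, so I expect the hypothesis is imposed only to single out the range of meridian traces $x=2\cos\frac{\pi}{r}$ relevant to the cyclic–branched–covering application of Section 3, rather than being essential to the existence of the root.
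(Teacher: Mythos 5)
Your proof is correct, and its key step is genuinely different from the paper's. Both arguments start identically: evaluate $\phi_K(x,x^2-2)=S_{n-1}(2)-S_{n-2}(2)=1>0$, use the identity $2-\lambda=(y+2-x^2)\left(S_m(y)-S_{m-1}(y)\right)^2$ to see that $\lambda$ decreases continuously from $2$ to $-\infty$ on $[x^2-2,\infty)$, and then steer $\lambda$ to a prescribed value $\lambda_0\in(-2,2)$ at some $y_0>x^2-2$ where $\phi_K$ can be shown negative. The difference is the choice of $\lambda_0$. The paper takes $\lambda_0$ to be a root of $S_{n-1}(t)-S_{n-2}(t)$ (namely $2\cos\frac{3\pi}{2n-1}$, resp.\ $2\cos\frac{\pi}{2l+1}$ for $n=-l<0$), so that $\phi_K(x,y_0)=(\alpha-1)S_{n-1}(\lambda_0)$; determining the sign of $\alpha-1=(y_0+2-x^2)S_{m-1}(y_0)(S_m(y_0)-S_{m-1}(y_0))$ is exactly where the hypothesis $|x|\ge 2\cos\frac{\pi}{4m+2}$ enters (it forces $y_0>x^2-2\ge 2\cos\frac{\pi}{2m+1}$, hence $S_m(y_0)>S_{m-1}(y_0)>0$), and it also forces the paper to split off $m=0$ and $n<0$ as separate subcases. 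You instead take $\lambda_0=2\cos\frac{\pi}{|n|}$, a root of $S_{n-1}$ itself, which annihilates the $\alpha$-term outright and gives $\phi_K(x,y_0)=-S_{n-2}(\lambda_0)=-1$ with no information about $\alpha$ needed; your computation that $S_{n-2}(2\cos\frac{\pi}{|n|})=1$ for both signs of $n$ checks out, as does your separate treatment of $n=-1$ (where $\phi_K=\lambda-\alpha\to-\infty$). The payoff is a uniform argument over all $m\ge 0$ and all admissible $n$, and a strictly stronger conclusion: as you observe, the hypothesis on $|x|$ is not used at all, so the root $y>x^2-2$ exists for every real $x$. Your reading of why the hypothesis appears in the statement (it matches the traces $2\cos\frac{\pi}{r}$ with $r>4m+2$ used in Section 3) is consistent with how the proposition is applied there.
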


\begin{proof}
Recall that for $K=J(2m+1,2n)$, $\alpha=1+(y+2-x^2)S_{m-1}(y) \left( S_m(y) - S_{m-1}(y) \right)$ and $\lambda=x^2-y-(y-2)(y+2-x^2)S_m(y)S_{m-1}(y)$. It is obvious that if $y=x^2-2$ then $\alpha=1$ and $\lambda=2$. Hence $$\phi_K(x,x^2-2)=S_{n-1}(\lambda)\alpha - S_{n-2}(\lambda)=S_{n-1}(2) - S_{n-2}(2)=1.$$

We consider the following two cases.

 \textit{\underline{Case 1}: $n \ge 3$.} Note that the polynomial $S_{n-1}(t)-S_{n-2}(t)$ has exactly $n-1$ roots given by $t=2\cos \frac{(2j-1)\pi}{2n-1}$, where $1 \le j \le n-1$. Moreover $$S_{n-1}(2\cos \frac{\pi}{2n-1})>0>S_{n-1}(2\cos \frac{3\pi}{2n-1}).$$
 
Suppose $m=0$. Then $\alpha=1$ and $\lambda=x^2-y$. We have 
$$\phi_K(x, x^2-2\cos \frac{\pi}{2n-1})=S_{n-1}(2\cos \frac{\pi}{2n-1})-S_{n-2}(2\cos \frac{\pi}{2n-1})=0.$$
In this case we choose $y=x^2-2\cos \frac{\pi}{2n-1}$. Then $\phi_K(x,y)=0$ and $y>x^2-2$.

We now suppose $m>0$. Note that $2-\lambda=(y+2-x^2)(S_m(y)-S_{m-1}(y))^2$. Consider the equation $\lambda = 2\cos \frac{3\pi}{2n-1}$, i.e. $(y+2-x^2)(S_m(y)-S_{m-1}(y))^2=2-2\cos \frac{3\pi}{2n-1}.$ It is easy to see that this equation has at least one solution $y_0>x^2-2$. Note that $x^2-2 \ge 2 \cos \frac{\pi}{2m+1}$. Since $y_0>2 \cos \frac{\pi}{2m+1}$, we have $S_m(y_0)>S_{m-1}(y_0)>0$. Hence
\begin{eqnarray*}
\phi_K(x,y_0) &=& S_{n-1}(\lambda)\alpha - S_{n-2}(\lambda)=(\alpha-1) S_{n-1}(\lambda)\\
&=& (y_0+2-x^2)S_{m-1}(y_0) \left( S_m(y_0)-S_{m-1}(y_0) \right) S_{n-1}(2\cos \frac{3\pi}{2n-1})<0.
\end{eqnarray*}
Since $\phi_K(x,x^2-2)>0>\phi_K(x,y_0)$, there exists $y \in (x^2-2,y_0)$ such that $\phi_K(x,y)=0$.

\textit{\underline{Case 2}: $n \le -1$.} Let $l=-n  \ge 1$. We have $$\phi_{K}(x,y):= S_{n-1}(\lambda)\alpha-S_{n-2}(\lambda)=S_l(\lambda)-S_{l-1}(\lambda)\alpha.$$ 

Suppose $m=0$. Then $\alpha=1$ and $\lambda=x^2-y$. In this case we choose $y=x^2-2\cos \frac{\pi}{2l+1}$. Then $\phi_K(x,y)=0$ and $y>x^2-2$.

We now suppose $m>0$. Consider the equation $\lambda = 2\cos \frac{\pi}{2l+1}$, i.e. $(y+2-x^2)(S_m(y)-S_{m-1}(y))^2=2-2\cos \frac{\pi}{2l+1}.$ This equation has at least one real solution $y_0>x^2-2 \ge 2 \cos \frac{\pi}{2m+1}$. We have
\begin{eqnarray*}
\phi_K(x,y_0) &=& S_l(\lambda)-S_{l-1}(\lambda) \alpha \\
&=& -(y_0+2-x^2)S_{m-1}(y_0) \left( S_m(y_0)-S_{m-1}(y_0) \right) S_{l}(2\cos \frac{\pi}{2l+1})<0.
\end{eqnarray*}
Hence there exists $y \in (x^2-2,y_0)$ such that $\phi_K(x,y)=0$.

This completes the proof of Proposition \ref{riley22}.
\end{proof}

\section{Proof of Theorems \ref{main} and \ref{main2}} For a knot $K$ in $S^3$, let $X_K = S^3 \setminus K$ be the knot complement. Recall that
$$ SU(1,1)=\left\{\begin{pmatrix} u & v \\ \bar{v} & \bar{u} \end{pmatrix} : |u|^2 - |v|^2 =1 \right\} \subset SL(2,\mathbb{C})$$
is the special unitary subgroup of $SL(2,\mathbb{C})$. Let $I$ denote the identity matrix in  $SL(2,\mathbb{C})$.

\medskip

The following theorem of Y. Hu is important to us.

\begin{theorem} [\cite{Hu}]
Given any prime knot $K$ in $S^3$, let $\mu$ be a meridian element
of $\pi_1(X_K)$. If there exists a non-abelian representation $\rho : \pi_1(X_K) \to SU(1,1)$ such that
$\rho(\mu^r)= \pm I$ then the fundamental group $\pi_1(X^{(r)}_K)$ is left-orderable.
\label{Hu}
\end{theorem}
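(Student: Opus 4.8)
The plan is to lift the given representation to the universal covering group $\widetilde{SU(1,1)}$, which is left-orderable by Bergman's theorem \cite{Be}, and then to feed a suitable such representation of $\pi_1(X^{(r)}_K)$ into the criterion of Khoi \cite{Kh} used in \cite{BGW} to conclude left-orderability. The first task is to make $\pi_1(X^{(r)}_K)$ algebraically explicit. Let $\phi\colon\pi_1(X_K)\to\BZ/r\BZ$ be the reduction of the abelianization with $\phi(\mu)=1$, and let $\Pi=\ker\phi$ be the group of the unbranched cyclic cover. The branched covering is obtained from the unbranched one by Dehn filling the boundary torus along the lift of $\mu^r$ (the meridian of the branch locus), so that $\pi_1(X^{(r)}_K)=\Pi/\langle\langle\mu^r\rangle\rangle$, the quotient of $\Pi$ by the normal closure of $\mu^r$. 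Lin's presentation \cite{Li} of $\pi_1(X_K)$ by meridional generators adapted to $\phi$ is exactly what makes this quotient, together with the relation $\mu^r=1$, concrete, and primeness of $K$ guarantees that $X^{(r)}_K$ is irreducible.

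Next I would lift $\rho$. Since $X_K$ is aspherical with $H^2(\pi_1(X_K);\BZ)=H^2(X_K;\BZ)=0$, the Euler obstruction to lifting $\rho\colon\pi_1(X_K)\to SU(1,1)$ through $\widetilde{SU(1,1)}\to SU(1,1)$ vanishes, so a lift $\tilde\rho\colon\pi_1(X_K)\to\widetilde{SU(1,1)}$ exists. Because $\rho(\mu^r)=\pm I$ lies in the centre $\{\pm I\}$ of $SU(1,1)$, the element $\tilde\rho(\mu^r)$ lies in the centre $\BZ$ of $\widetilde{SU(1,1)}$, say $\tilde\rho(\mu^r)=z^k$ with $z$ a generator. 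To descend to $\pi_1(X^{(r)}_K)$ I must arrange $\tilde\rho(\mu^r)=1$ after restricting to $\Pi$. Here I use that the abelianization $\pi_1(X_K)\to\BZ$ restricts on $\Pi$ to $r$ times a homomorphism $\chi_0\colon\Pi\to\BZ$ with $\chi_0(\mu^r)=1$, so $[\mu^r]$ is primitive in $H_1(\Pi)$. Modifying $\tilde\rho|_\Pi$ by the central-valued homomorphism $-k\chi_0$ replaces $z^k$ by $z^{k-k}=1$ while changing the representation only by central elements; the resulting homomorphism kills $\mu^r$ and all its conjugates and hence factors through $\bar\rho\colon\pi_1(X^{(r)}_K)\to\widetilde{SU(1,1)}$. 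It is precisely the hypothesis $\rho(\mu^r)=\pm I$, rather than an arbitrary value, that places $\tilde\rho(\mu^r)$ in the centre and makes this cancellation possible.

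Finally I would verify the non-degeneracy needed to apply the criterion. The image $\rho(\Pi)$ is infinite, having finite index in the infinite non-abelian group $\rho(\pi_1(X_K))$, so there is $g\in\pi_1(X^{(r)}_K)$ with $\rho(g)\ne\pm I$, whence $\bar\rho$ has non-central image in $\widetilde{SU(1,1)}$. When $X^{(r)}_K$ is a rational homology sphere, Khoi's result as applied in \cite{BGW} then gives that $\pi_1(X^{(r)}_K)$ is left-orderable, the essential point being that $\widetilde{SU(1,1)}$ is itself left-orderable \cite{Be}. When instead $b_1(X^{(r)}_K)>0$, the manifold is irreducible with positive first Betti number and left-orderability holds automatically \cite{HSt}.

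I expect two places to require the most care. The first is the descent across the Dehn filling: showing that the central value $\tilde\rho(\mu^r)=z^k$ can be normalized to the identity on $\Pi$, which is exactly where the hypothesis $\rho(\mu^r)=\pm I$ and the primitivity of $[\mu^r]$ in $H_1(\Pi)$ enter, and where Lin's presentation supplies the explicit bookkeeping. The second is checking that $\bar\rho$ meets the precise non-degeneracy hypothesis of Khoi's criterion on the index-$r$ subgroup; the deep topological content—that such a non-trivial $\widetilde{SU(1,1)}$-representation of a rational homology sphere group forces a left-order—is what the citations \cite{Kh,BGW} provide.
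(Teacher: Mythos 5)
Your proposal follows essentially the same route as the paper's (sketched) proof: lift $\rho$ to $\widetilde{SU(1,1)}$ using the vanishing of the Euler class, exploit the centrality of $\tilde\rho(\mu^r)$ together with Lin's presentation to descend to a non-trivial homomorphism $\pi_1(X^{(r)}_K)\to\widetilde{SU(1,1)}$, and conclude from irreducibility (primeness of $K$) plus the left-orderability of $\widetilde{SU(1,1)}$. The only slip is attributing the final step to Khoi's criterion: the paper invokes the Boyer--Rolfsen--Wiest/Howie--Short characterization (a non-trivial homomorphism from the fundamental group of a compact orientable irreducible $3$-manifold to a left-orderable group forces left-orderability), while Khoi's result is used elsewhere in the paper, to conjugate $SL_2(\BR)$ representations into $SU(1,1)$.
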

\textit{Sketch of the proof of Theorem \ref{Hu}.} Let $\widetilde{SU(1,1)}$ be the universal covering group of $SU(1,1)$. There is a lift of $\rho: \pi_1(X_K) \to SU(1,1)$ to a homomorphism $\widetilde{\rho}: \pi_1(X_K) \to \widetilde{SU(1,1)}$ since the obstruction to its existence is the Euler class $e(\rho) \in H^2(X_K; \BZ) \cong 0$, see \cite{Gh}. Using the Lin's presentation \cite{Li} for the knot group $\pi_1(X_K)$ together with the hypotheses that $\rho(\mu^r)= \pm I$ and $\rho$ is non-abelian, Y. Hu \cite{Hu} shows that the homomorphism $\widetilde{\rho}$ induces a non-trivial homomorphism $\pi_1(X^{(r)}_K) \to \widetilde{SU(1,1)}$. By \cite{BRW, HSt}, a compact, orientable, irreducible 3-manifold has a left-orderable fundamental group
if and only if there exists a non-trivial homomorphism from its fundamental group
to a left-orderable group. We have that $X^{(r)}_K$ is irreducible (since $K$ is prime) and $\widetilde{SU(1,1)}$ is left-orderable. Hence $\pi_1(X^{(r)}_K)$ is left-orderable. This proves Theorem \ref{Hu}.

\medskip

We are ready to prove Theorems \ref{main} and \ref{main2}. For the two-bridge knot $\fb(p,m)$, it is known that the Riley polynomial $\phi_{\fb(p,m)}(x,y)$ is a polynomial in $\BZ[x,y]$ with $y$-leading term $y^d$, where $d=\frac{p-1}{2}$, see \cite{Ri}.

\subsection{Proof of Theorem \ref{main}.} Consider $K=J(2m,2n)$ where $m,n$ are positive integers. Note that $K=\fb(4mn-1, 2n)$ and hence the Riley polynomial $\phi_{K}(x,y)$ is a polynomial in $\BZ[x,y]$ with $y$-leading term $y^d$, where  $d=2mn-1$. Since $d$ is odd, for each $x \in \BR$ the equation $\phi_K(x,y)=0$ has at least one real root $y$. 

For any integer $r > \frac{\pi}{\cos^{-1}\sqrt{1-\frac{1}{4mn}}}$, there is a non-abelian representation $\rho: \pi_1(X_K) \to SL_2(\BC)$ of the form
$$\rho(a)=\left[ \begin{array}{cc}
e^{i\frac{\pi}{r}} & 1\\
0 & e^{-i\frac{\pi}{r}} \end{array} \right] \quad \text{and} \quad \rho(b)=\left[ \begin{array}{cc}
e^{i\frac{\pi}{r}} & 0\\
2-y & e^{-i\frac{\pi}{r}} \end{array} \right]$$
where $y \in \BR$. Note that $x=\tr \rho(a)=2\cos \frac{\pi}{r}$ and $\phi_K(x,y)=0$.

Since $x,y \in \BR$ satisfy $2\sqrt{1-\frac{1}{4mn}}<|x| \le 2$ and $\phi_K(x,y)=0$, Proposition \ref{y2} implies that $y>2$. Since $2-y<0$, a result in \cite{Kh} says that the representation $\rho$ can be conjugated an $SU(1,1)$ representation, denoted by $\rho': \pi_1(X_K) \to SU(1,1)$. Note that $\rho'(a^r)=-I$, since $\rho(a^r)=-I$. Hence Theorem \ref{Hu} implies that $\pi_1(X^{(r)}_K)$ is left-orderable.

\subsection{Proof of Theorem \ref{main2}.} Consider $K=J(2m+1,2n)$ where $m \ge 0$ and $|n|>0$. Note that $K=\fb(4mn+2n-1,2n)$ if $n>0$, and $K=\fb(-4mn-2n+1,-2n)$ if $n<0$. 

Let $q=2n^2 +2|n|\sqrt{4m(m+1)+n^2}$. We consider the following two cases.

\textit{\underline{Case 1}: $n>0$ even or $n<0$ odd.} In this case we have $K=\fb(p,m)$ for some integers $p,m$ such that $p \equiv 3 \pmod{4}$. Hence the Riley polynomial $\phi_{K}(x,y)$ is a polynomial in $\BZ[x,y]$ with $y$-leading term $y^d$, where  $d=\frac{p-1}{2}$ is odd. 

Suppose $r>\frac{\pi} { \cos^{-1} \sqrt{ 1-q^{-1}} }$. Then, by similar arguments as in the proof of Theorem \ref{main}, one can show that the group $\pi_1(X^{(r)}_K)$ is left-orderable. 

\textit{\underline{Case 2}: $n>1$ odd or $n<0$ even.} Suppose $r>\max \{ \frac{\pi} { \cos^{-1} \sqrt{ 1-q^{-1}} }, 4m+2 \}$.

Let $x=2\cos \frac{\pi}{r}$. Since $x\in \BR$ satisfies $|x| \ge 2 \cos \frac{\pi}{4m+2}$, by Proposition \ref{riley22} there exists $y \in \BR$ such that $\phi_K(2\cos \frac{\pi}{r},y)=0$. Hence there is a non-abelian representation $\rho: \pi_1(X_K) \to SL_2(\BC)$ of the form
$$\rho(a)=\left[ \begin{array}{cc}
e^{i\frac{\pi}{r}} & 1\\
0 & e^{-i\frac{\pi}{r}} \end{array} \right] \quad \text{and} \quad \rho(b)=\left[ \begin{array}{cc}
e^{i\frac{\pi}{r}} & 0\\
2-y & e^{-i\frac{\pi}{r}} \end{array} \right].$$ 
Since $x=2\cos \frac{\pi}{r}$ also satisfies $|x|>2\sqrt{1-q^{-1}}$, Proposition \ref{y2} implies that $y>2$. The rest of the proof is similar to that of Theorem \ref{main}.

This completes the proof of Theorem \ref{main2}.

\end{document}